\theoremstyle{plain}
 \newtheorem{thm}{Theorem}
 \newtheorem{lem}{Lemma}
 \newtheorem*{thm*}{Theorem}
 \newtheorem*{lem*}{Lemma}
 \newtheorem{cor}{Corollary}
 \newtheorem*{cor*}{Corollary}
 \newtheorem{prop}{Proposition}
\theoremstyle{definition}
 \newtheorem{ex}{Example}
  \newtheorem*{remark*}{Remark}
 \newtheorem{remark}{Remark}
\newcommand{\noi}{\noindent}
\begin{document}

\title{$2$-stratifold spines of closed $3$-manifolds}

\author{J. C. G\'{o}mez-Larra\~{n}aga\thanks{Centro de
Investigaci\'{o}n en Matem\'{a}ticas, A.P. 402, Guanajuato 36000, Gto. M\'{e}xico. jcarlos@cimat.mx} \and F.
Gonz\'alez-Acu\~na\thanks{Instituto de Matem\'aticas, UNAM, 62210 Cuernavaca, Morelos,
M\'{e}xico and Centro de
Investigaci\'{o}n en Matem\'{a}ticas, A.P. 402, Guanajuato 36000,
Gto. M\'{e}xico. fico@math.unam.mx} \and Wolfgang
Heil\thanks{Department of Mathematics, Florida State University,
Tallahasee, FL 32306, USA. heil@math.fsu.edu}}
\date{}

\maketitle

\begin{abstract} $2$-stratifolds are a generalization of $2$-manifolds in that there are disjoint simple closed branch curves. We obtain a list of all closed $3$-manifolds that have a $2$-stratifold as a spine. \footnote {\bf{{AMS classification numbers}}:  57N10, 57M20, 57M05} \footnote{\bf{{Key words and phrases}}: 2-stratifolds, spines of 3-manifolds}
\end{abstract}

\section{Introduction}  

$2$-{\it stratifolds} form a special class of $2$-dimensional stratified spaces. A (closed with empty $0$-stratum) $2$-stratifold is a compact connected $2$-dimensional cell complex $X$ that contains a $1$-dimensional subcomplex $X^{(1)}$, consisting of branch curves, such that $X-X^{(1)}$ is a (not necessarily connected) $2$-manifold. The exact definition is given in section 2. $X$ can be constructed from a disjoint union $X^{(1)}$ of circles and compact $2$-manifolds $W^2$ by attaching each component of $\partial W^2$ to $X^{(1)}$ via a covering map $\psi: \partial W^2 \to X^{(1)}$, with $\psi^{-1} (x)>2$ for $x\in X^{(1)}$. A slightly more general class of $2$-dimensional stratified spaces, called {\it multibranched surfaces} and which have been defined and studied in \cite{MO}, is obtained by allowing boundary curves, i.e. considering a covering map $\psi:\partial W'\to X^{(1)}$, where $\partial W'$ is a sub collection of the components of $\partial W^2$.

$2$-stratifolds arise as the nerve of certain decompositions of $3$-manifolds into pieces where they determine whether the $\mathcal{G}$-category of the $3$-manifold is $2$ or $3$ (\cite{GGH}). They are related to {\it foams}, which include special spines of $3$-dimensional manifolds and which have been studied by Khovanov \cite{Ko} and Carter \cite{SC}. Simple $2$-dimensional stratified spaces arise in Topological Data Analysis \cite{B},  \cite{CL}. 

 Matsuzaki and Ozawa \cite{MO} show that $2$-stratifolds can be embedded in $\mathbb{R}^4$. Furthermore they show that they can be embedded into some orientable closed $3$-manifold if and only if their branch curves satisfy a certain regularity condition. However, the embeddings are not $\pi_1$-injective, i.e. the induced homomorphism of fundamental groups is not injective. In fact, there are many $2$-stratifolds whose fundamental group is not isomorphic to a subgroup of a $3$-manifold group; for example there are infinitely many $2$-stratifolds with (Baumslag-Solitar) non-Hopfian fundamental groups. These can not be embedded as $\pi_1$-injective subcomplexes into $3$-manifolds since $3$-manifold groups are residually finite. On the other hand, every $2$-manifold embeds $\pi_1$-injectively in some (Haken) $3$-manifold. Since subgroups of $3$-manifold groups are $3$-manifold groups, the following question arises:

Question 1. Which $3$-manifolds $M$ have fundamental groups isomorphic to the fundamental group of a $2$-stratifold?

The fundamental group of a closed $2$-manifold $S$ is isomorphic to the fundamental group of a closed $3$-manifold $M$ if and only if $S$ is the $2$-sphere or projective plane and $M$ is $S^3$ or $P^3$, respectively. Since $S^2$ is not a spine of $S^3$, the only closed $3$-manifold with a (closed) $2$-manifold spine is $P^3$. This motivates the next question:
 
 Question 2. Which closed $3$-manifolds $M$ have spines that are $2$-stratifolds?

The main results of this paper are Theorem \ref{piM} which answers question 1 for closed $3$-manifolds and Theorem \ref{mainthm}, which answers question 2 by showing that a closed $3$-manifold $M$ has a $2$-stratifold spine if and only if $M$ is a connected sum of lens spaces, $S^2$-bundles over $S^1$, and $P^2{\times}S^1$'s.  

\section{$2$-stratifolds and their graphs.}

In this section we review the definitions of a $2$-stratifold $X$ and its associated graph $G_X$ given in \cite{GGH1}. 

A (closed) $2$-stratifold is a compact $2$-dimensional cell complex $X$ that contains a $1$-dimensional subcomplex $X^{(1)}$, such that $X-X^{(1)}$ is a  $2$-manifold ($X^{(1)}$ and $X-X^{(1)}$ need not be connected). A component $C\approx S^1$ of $X^1$ has a regular neighborhood $N(C)= N_{\pi}(C)$ that is homeomorphic to $(Y {\times}[0,1]) /(y,1)\sim (h(y),0)$, where $Y$ is the closed cone on the discrete space $\{1,2,...,d\}$ (for $d\geq 3$) and $h:Y\to Y$ is a homeomorphism whose restriction to $\{1,2,...,d\}$ is the permutation $\pi:\{1,2,...,d\}\to  \{1,2,...,d\}$. The space $N_{\pi}(C)$ depends only on the conjugacy class of $\pi \in S_d$ and therefore is determined by a partition of $d$. A component of $\partial N_{\pi}(C)$ corresponds then to a summand of the partition determined by $\pi$. Here the neighborhoods $N(C)$ are chosen sufficiently small so that for disjoint components $C$ and $C'$ of $X_1$, $N(C)$ is disjoint from $N(C' )$. 

Note that $X$ may also be described as a quotient space $W \cup_{\psi}X^{(1)}$, where $\psi:\partial W\to X^{(1)}$ is a covering map (and $|\psi^{-1}(x)| >2$ for every $x\in X^{(1)}$).

We construct an associated  bicolored graph $G=G_X$ of  $X=X_G$ by letting the white vertices $w$ of $G_X$ be the components $W$ of $M:=\overline{X-\cup_j N(C_j)}$ where $C_j$  runs over the components of $X^1$;  the black vertices $b_j$ are the $C_j$'s. An edge $e$ is a component $S$ of $\partial M$; it joins a white vertex $w$ corresponding to $W$ with a black vertex $b$ corresponding to $C_j $ if $S=W\cap N (C_j)$. The number of boundary components of $W$ is the number of adjacent edges of $W$. 

$G_X$ embeds naturally as a retract into $X_G$.

We label the white vertices $w$ with the  genus $g$ of $W$; here we use Neumann's \cite{N} convention of assining negative genus $g$ to nonorientable surfaces; for example the genus $g$ of the projective plane or the Moebius band is $-1$, the genus of the Klein bottle is $-2$. We orient all components $C_j$ and $S$ of $X^{(1)}$ and $\partial W$, resp., and assign a label $m$ to an edge $e$, where $|m|$ is the summand of the partition $\pi$ corresponding to the component $S\subset \partial N_{\pi}(C)$;  the sign of $m$ is determined by the orientation of $C_j$ and $S$. In terms of attaching maps, $m$ is the degree of the covering map $\psi:S\to C_j$ for the corresponding components of $\partial W$ and $X^{(1)}$.

  (Note that the partition $\pi$ of a black vertex is determined by the labels of its adjacent edges). 

\section{Structure of $\pi_1 (X_G)$}

In this section we obtain a natural presentation for the fundamental group of a $2$-stratifold $X_G$ with associated bicolored graph $G=G_X$ and describe $\pi_1 (X_G )$ as the fundamental group of a graph of groups $\mathcal{G}$ with the same underlying graph $G$.\\

\noindent For a given white vertex $w$, the compact $2$-manifold $W$ has conveniently oriented boundary curves $s_1 ,\dots , s_p$ such that\\

 \noi (*) \qquad $\pi_1 (W)=\langle s_1 ,\dots, s_p , y_1 ,\dots , y_n : s_1 \cdots s_p \cdot q =1 \rangle $\\
 
 \noindent  where $q=[y_1 ,y_2 ]\dots [y_{2g-1},y_{2g}]$,  if $W$ is orientable of genus $g$ and $n=2g$,
 
\noindent $q=y_1^2 \dots y_n^2$,  if $W$ is non-orientable of genus $-n$.\\

\noindent Let $\mathcal{B}$ be the set of black vertices, $\mathcal{W}$ the set of white vertices and choose a fixed maximal tree $T$ of $G$. Choose  orientations of the black vertices and of all boundary components of $M$ such that all labels of edges in $T$ are positive.\\

Then $\pi_1 (X_G )$ has a natural presentation with  \\

 \noindent \begin{tabular}{rl}
 generators: & $\{b \}_{{b }\in \mathcal{B}}$    \\
 
    & $\{s_1 ,\dots, s_p , y_1 ,\dots , y_n\}$, one set for each $w\in \mathcal{W}$,  as in $(*)$\\
   
   &  $\{t_i\}$, one $t_i$ for each  edge $c_i \in G-T$ between $w$ and $b$ \\
   
\end{tabular} \\

\noindent \begin{tabular}{rl}
and relations: & $s_1 \cdots s_p \cdot q =1$, one for each $w\in \mathcal{W}$,  as in $(*)$\\

&  $b^m =s_i$, for each edge $s_i \in T$ between $w$ and $b$ with label $m\geq 1$ \\

& $t_i^{-1}s_i t_i=b^{m_i}$, for each edge $s_i \in G-T$ between $w$ and $b$ with label $m_i \in \mathbb{Z}$. \\
\end{tabular} \\

\

As an example we show in Figure 1 (the graph of) a $2$-stratifold $X_G$ with $\pi_{1}(X_G )=\mathcal{F}$, an $F$-group as in Proposition (III)5.3 of \cite{LS}, with presentation\\

 \noi ($\mathcal{F}$) \qquad $\mathcal{F}=\langle c_1 ,\dots, c_p , y_1 ,\dots , y_n : c_1^{m_1}, \dots ,c_p^{m_p},\,c_1 \cdots c_p \cdot q =1 \rangle $
 
 where $p,n\geq 0$, all $m_i >1$ and $q=[y_1 ,y_2 ]\dots [y_{2g-1},y_{2g}]$ or $q=y_1^2 \dots y_n^2$.\\
 
 Here we have denoted the generators corresponding to the black vertices by $c_i$, rather than $b_i$, to indicate that the finite order elements correspond to attaching disks along the boundary curves of $W$.
 
\begin{figure}[ht]
\begin{center}
\includegraphics[width=2.5in]{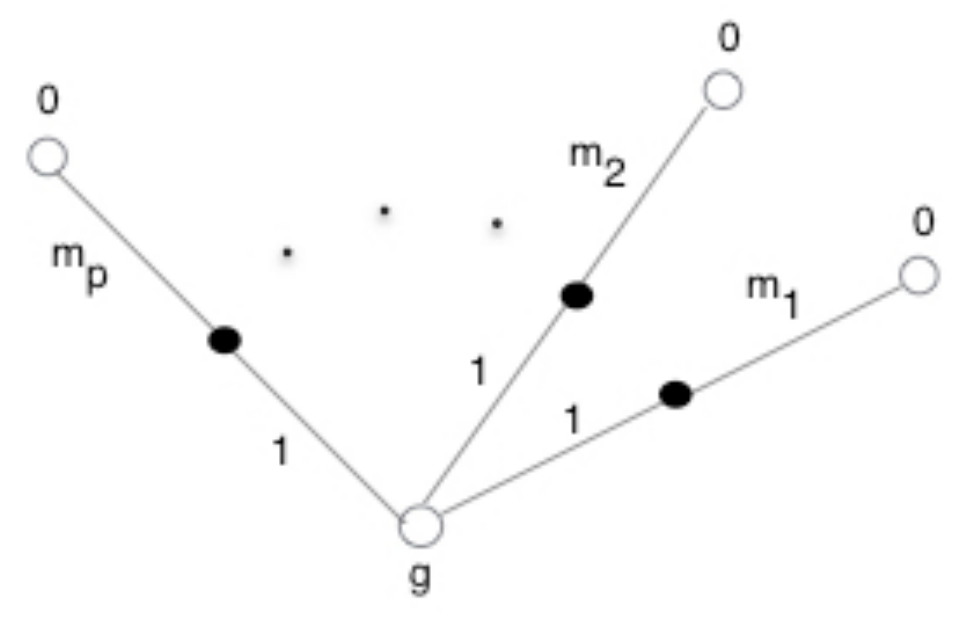} 
\end{center}
\caption{F-group}
\end{figure}

\vspace{2in}
The fundamental group of $X_G$ is best described as the fundamental group of a graph of groups  \cite{GGH2}. \\

If $\pi_1 (X_G )$ has no elements of finite order, then $\pi_1 (X_G )$ is the fundamental group of a graph of groups $\mathcal{G}$, with underlying graph $G$, the groups of white vertices are the fundamental groups of the $W's$, the groups of the black vertices and edges are (infinite) cyclic.

Elements of finite order occur when a generator $b$ of a black vertex has finite order $o(b)\geq 1$. In this case we attach $2$-cells $d_b$ and $d_e$ to $C_b$, the circle corresponding to $b$, as follows: $d_b$ is attached by a map of degree $o(b)$. If $e$ is an edge joining $b$ to $w$ with label $m$, attach $d_e$ with degree $o(c)=o(b)/(o(b),m)$. Letting $\hat{X}_b = N(C_b) \cup d_b \cup (\cup d_e )$, where $e$ runs over the edges having $b$ as an endpoint, $\hat{X}_w =W\cup (\cup d_e )$, where $e$ runs over the edges incident to $w$, and $\hat{X}_e = (\hat{X}_b \cap \hat{X}_w)$, for an edge $e$ joining $b$ to $w$, we obtain a graph of CW-complexes that determines a graph of groups $\mathcal{G}$ with the same underlying graph as $G_X$. 

The vertex groups are $G_b = \pi_1 (\hat{X}_b)$ and $G_w = \pi_1 (\hat{X}_w)$, the edge groups are $G_e = \pi_1 (\hat{X}_e)$, the monomorphisms $\delta: G_e \to G_b$ (resp. $G_e \to G_w$ are induced by inclusion. Then (see for example \cite{SW},\cite{Serre}) $\pi_1 \mathcal{G}\cong \pi_1 (\hat{X})$. 

Note that the groups $G_b$ of the black vertices and the groups $G_e$ of the edges are cyclic. For a white vertex $w$ with edges $e_1 ,\dots e_p$ labelled $m_1 ,\dots m_p$ with associated vertex space $X_w =W\cup_{i=1}^r d_{e_i}$ we obtain 

\noi $G_w =\langle c_1 ,\dots, c_p , y_1 ,\dots , y_n : c_1 \cdots c_p \cdot q =1\,,\,c_1^{k_1}=\dots =c_1^{k_r}= 1 \rangle$ 

\noi where $q$ is as in ($\mathcal{F}$), $1\leq r\leq p$ and $k_i \geq 1$.

If all $k_i \geq 2$ and $r=p$ then $G_w$ is an $F$-group (\cite{LS} p. 126-127), otherwise it is a free product of cyclic groups.


\section{Necessary Conditions}

In this section we show that a $2$-stratifold group that is a closed $3$-manifold group is a free product of cyclic or  $\mathbb{Z}{\times}\mathbb{Z}_2$ groups.\\

First consider an $F$-group $\mathcal{F}$ as in ($\mathcal{F}$). 

\begin{prop}\label{Fsubgrps} {\upshape(\cite{LS} Proposition (III)7.4)} Let $H$ be a subgroup of an $F$-group. If $H$ has finite index then $H$ is an $F$-group. If $H$ has infinite index then $H$ is a free product of cyclic groups.
\end{prop}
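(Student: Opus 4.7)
The plan is to prove this geometrically by realizing $\mathcal{F}$ as the orbifold fundamental group of a closed $2$-dimensional orbifold and then exploiting the orbifold covering correspondence. Let $\mathcal{O}$ be the closed $2$-orbifold whose underlying surface is a closed surface $S_g$ (orientable or non-orientable according to the form of $q$ in $(\mathcal{F})$) carrying $p$ cone points of orders $m_1,\dots,m_p$. A standard computation with the polygonal presentation of $S_g$ punctured at the cone points, together with the meridian relations $c_i^{m_i}=1$, shows that $\pi_1^{\mathrm{orb}}(\mathcal{O})$ has exactly the presentation $(\mathcal{F})$. By orbifold covering theory, every subgroup $H\leq \mathcal{F}$ corresponds to a connected covering orbifold $\widetilde{\mathcal{O}}\to \mathcal{O}$ of degree $[\mathcal{F}:H]$ with $\pi_1^{\mathrm{orb}}(\widetilde{\mathcal{O}})\cong H$.

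I would then split into cases based on the index. If $[\mathcal{F}:H]<\infty$, the cover $\widetilde{\mathcal{O}}$ is compact, hence again a closed $2$-orbifold with finitely many cone points (whose orders divide the corresponding $m_i$), and its orbifold fundamental group is presented again in the form $(\mathcal{F})$, so $H$ is an $F$-group. If $[\mathcal{F}:H]=\infty$, then $\widetilde{\mathcal{O}}$ is non-compact. Invoking the classification of non-compact surfaces, $\widetilde{\mathcal{O}}$ deformation retracts onto a $1$-dimensional sub-orbifold $\Gamma$, chosen to contain all (possibly infinitely many) cone points as vertices. Then $\Gamma$ is a graph of groups with trivial edge groups whose vertex groups are either trivial or finite cyclic, so by Bass--Serre theory $H\cong \pi_1^{\mathrm{orb}}(\Gamma)$ is the free product of a free group (coming from the loops of $\Gamma$) with finite cyclic groups (one per cone vertex), i.e.\ a free product of cyclic groups.

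The main obstacle is the infinite-index step: one has to produce a deformation retraction of the non-compact covering orbifold onto a $1$-dimensional spine through which all the cone points lie as vertices, and then justify that the resulting edge groups can be taken trivial so that the graph of groups really does yield a free product. A cleaner but less transparent alternative is the purely combinatorial route of \cite{LS}: apply Reidemeister--Schreier to the presentation $(\mathcal{F})$ against a Schreier transversal for $H$ and simplify via Tietze transformations; the main work is then bookkeeping the rewritten relators to recognise the output as either an $F$-group presentation (finite index) or a free product of cyclics (infinite index, where the long orientation relator becomes a consequence of the power relators and can be deleted).
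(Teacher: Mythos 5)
The paper does not actually prove this proposition: it is quoted verbatim from \cite{LS}, Proposition (III)7.4, whose proof is the combinatorial one you mention in your last sentence (Reidemeister--Schreier against a Schreier transversal, followed by Nielsen-style simplification of the rewritten presentation). So your orbifold argument is a genuinely different route, and in the generic case it is the standard geometric proof of this fact: a finite-index subgroup corresponds to a finite covering orbifold, which is again closed and hence has a presentation of type $(\mathcal{F})$, while an infinite-index subgroup corresponds to a non-compact covering orbifold, which retracts onto a graph-of-groups spine with trivial edge groups and cyclic vertex groups. The geometric route buys transparency and generalizes (it is how one sees that infinite-index subgroups of Fuchsian groups are free products of cyclics); the combinatorial route of \cite{LS} avoids any appeal to geometrization of $2$-orbifolds.

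Two points need attention before your sketch is a proof. First, the subgroup--covering correspondence only holds for developable (good) orbifolds, and not every presentation $(\mathcal{F})$ comes from one: the teardrop ($n=0$, $p=1$) and the spindle with $m_1\neq m_2$ ($n=0$, $p=2$) are bad. You must split these off at the start; there $\mathcal{F}$ is finite cyclic (cf.\ Proposition \ref{finiteF}(b)), so every subgroup is finite cyclic of finite index and the statement is trivial. For all remaining cases you should record why the orbifold is good (it admits a spherical, Euclidean or hyperbolic structure), which is what licenses the covering theory. Second, the infinite-index step you flag as the main obstacle is indeed the entire content of the theorem. The clean way to finish it: remove small cone-disk neighborhoods of the (possibly infinitely many) cone points of $\widetilde{\mathcal{O}}$; the complement is a non-compact surface $F$ with one compact boundary circle $\beta_j$ per cone point, and because $F$ is non-compact its fundamental group is free \emph{on a basis that contains all the classes $\beta_j$ simultaneously}. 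Van Kampen for orbifolds then amalgamates each $\mathbb{Z}_{k_j}$ to $\pi_1(F)$ along $\beta_j$, and since each $\beta_j$ is a free basis element the amalgamations just replace infinite cyclic factors by finite cyclic ones, giving a free product of cyclic groups (possibly with infinitely many factors, which the statement permits). Without the assertion about the $\beta_j$ forming part of a free basis --- which fails for compact $F$ and is exactly why the finite-index case behaves differently --- the edge groups do not visibly become trivial and the argument does not close.
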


\begin{prop}\label{finiteF} {\upshape(\cite{LS} p.132)} (a) $\mathcal{F}$ is finite non-cyclic if and only if  $n=0,\, p=3$ and $(m_1 ,m_2 ,m_3 )=(2,2,m)$ ($m\geq 2$) (the dihedral group of order $2m$) or $(m_1 ,m_2 ,m_3 )=(2,3,k)$ for $k=3,4$ or $5$ (the tetrahedral, octahedral, dodecahedral groups).
In each case, $c_1$ is a non-central element of order $2$.

(b) $\mathcal{F}$ is finite cyclic if and only if  $n=0,\, p\leq 2$ (the $2$-sphere orbifold with at most two cone points)  or $n=1,\,p\leq 1$  (the projective plane orbifold with at most one cone point).
\end{prop}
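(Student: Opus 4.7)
The plan is to interpret $\mathcal{F}$ as the fundamental group of a closed $2$-orbifold $O$ whose underlying surface $|O|$ is determined by $q$ (orientable of genus $n/2$ when $q$ is a product of commutators, non-orientable with $n$ crosscaps when $q$ is a product of squares), decorated with $p$ cone points of orders $m_1, \ldots, m_p$. A standard fact is that $\mathcal{F}$ is finite exactly when $O$ is spherical, equivalently
$$ \chi^{orb}(O) \;=\; \chi(|O|) - \sum_{i=1}^p \left(1 - \tfrac{1}{m_i}\right) \;>\; 0, $$
with $\chi(|O|) = 2-n$ in both genus conventions. This reduces the problem to a finite numerical enumeration.

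First I would rule out $n \geq 2$, since then $\chi(|O|) \leq 0$ forces $\chi^{orb} \leq 0$. The remaining cases are $n=0$ (sphere) and $n=1$ (projective plane). For $n=0$ with $p \leq 2$, direct manipulation of the presentation shows $\mathcal{F}$ is cyclic (trivial for $p \leq 1$, and $\mathbb{Z}_{\gcd(m_1,m_2)}$ for $p=2$). For $n=0, p=3$ we get the classical triangle group $\Delta(m_1,m_2,m_3)$, finite precisely when $1/m_1 + 1/m_2 + 1/m_3 > 1$, which forces the Platonic list $(2,2,m)$, $(2,3,3), (2,3,4), (2,3,5)$. For $n=0, p \geq 4$ one has $\sum (1 - 1/m_i) \geq p/2 \geq 2$, so $\chi^{orb} \leq 0$. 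For $n=1$, the relation $y_1^2 = (c_1 \cdots c_p)^{-1}$ lets me substitute and compute directly that $\mathcal{F} \cong \mathbb{Z}_2$ when $p=0$ and $\mathcal{F} \cong \mathbb{Z}_{2m_1}$ when $p=1$; for $p \geq 2$ the Euler characteristic count again forces $\chi^{orb} \leq 0$.

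To identify the non-cyclic triangle groups with their geometric counterparts, the approach is to exhibit surjections $\Delta(2,2,m) \twoheadrightarrow D_m$, $\Delta(2,3,3) \twoheadrightarrow A_4$, $\Delta(2,3,4) \twoheadrightarrow S_4$, $\Delta(2,3,5) \twoheadrightarrow A_5$ via the rotation actions on the regular $m$-gon and the regular tetrahedron, cube, and dodecahedron, then to match orders (e.g.\ by Todd-Coxeter or by the classification of finite subgroups of $SO(3)$). The non-central order-$2$ claim for $c_1$ is then immediate: $m_1=2$ in every case, while $A_4, S_4, A_5$ have trivial center, and in $D_m$ the generator $c_1$ represents a reflection lying outside the rotation subgroup. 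The main obstacle is this order-matching identification of the triangle groups with the Platonic rotation groups; the rest is bookkeeping on the presentation and the Euler characteristic inequality.
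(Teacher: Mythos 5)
The paper does not actually prove this proposition; it is quoted from Lyndon--Schupp, p.~132, where the finiteness of an $F$-group is decided algebraically by the sign of what is essentially the orbifold Euler characteristic. Your orbifold reading is the geometric counterpart of that same computation, and your plan --- enumerate the tuples with $\chi^{orb}>0$, reduce the small presentations by hand, and identify the positive triangle groups with the finite rotation groups by exhibiting surjections and matching orders --- is a complete and standard route to the classification. One small imprecision: ``finite iff spherical'' is not quite equivalent to ``$\chi^{orb}>0$'', since the bad orbifolds (teardrop, asymmetric spindle) have $\chi^{orb}>0$ and finite fundamental group without being spherical. This is harmless here because the only implication you actually use is $\chi^{orb}\le 0\Rightarrow \mathcal{F}$ infinite, and you compute the $n=0,\,p\le2$ and $n=1,\,p\le1$ cases directly from the presentation.

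The one step that genuinely fails is the non-centrality of $c_1$ in the dihedral case at $m=2$: $\Delta(2,2,2)\cong\mathbb{Z}_2{\times}\mathbb{Z}_2$ is abelian, so $c_1$ is central, and ``reflection lying outside the rotation subgroup'' does not force non-centrality there (for $m\ge3$ your argument is fine once you add that the center of $D_m$, trivial for $m$ odd and generated by the half-turn for $m$ even, lies inside the rotation subgroup). This is as much a defect of the statement as printed --- the last sentence of (a) is false for $(2,2,2)$ --- as of your proof, and it does no harm downstream: in Lemma~\ref{surfacegroup} the contradiction with Milnor's theorem still obtains for $\mathbb{Z}_2{\times}\mathbb{Z}_2$, which has three elements of order $2$ and hence cannot act freely on a homotopy $3$-sphere. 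You should either exclude $m=2$ from the non-centrality claim or replace it with the statement actually needed, namely that each group in (a) contains more than one element of order $2$.
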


\begin{lem} \label{product} $\mathcal{F}$ is not a non-trivial free product.
\end{lem}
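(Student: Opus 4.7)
Two easy observations about any nontrivial free product $A*B$ (with $A,B\ne 1$) drive the argument: (i) for any $1\ne a\in A$ and $1\ne b\in B$ the subgroup $\langle a\rangle *\langle b\rangle\le A*B$ is infinite (it is either the free group $F_2$ or the infinite dihedral group $\mathbb{Z}_2*\mathbb{Z}_2$), and (ii) in the Bass--Serre tree of the splitting the $A$-vertices are in bijection with the cosets of $A$ and form an infinite set, so both $A$ and $B$ have infinite index in $A*B$.

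If $\mathcal{F}$ is finite (as in Proposition~\ref{finiteF}), observation (i) is already a contradiction with $\mathcal{F}=A*B$. Assume then that $\mathcal{F}$ is infinite and suppose, for contradiction, that $\mathcal{F}=A*B$ with both factors nontrivial. By (ii) both $A$ and $B$ have infinite index in $\mathcal{F}$, and Proposition~\ref{Fsubgrps} forces each of $A$, $B$ to be a free product of cyclic groups; hence $\mathcal{F}=C_1*\cdots*C_k$ with $k\ge 2$ and each $C_i$ a nontrivial cyclic group.

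To finish I would pass to a torsion-free finite-index subgroup $\Gamma\le\mathcal{F}$. Applying Kurosh's subgroup theorem to the decomposition $\mathcal{F}=C_1*\cdots*C_k$ and using that every subgroup of a cyclic group is cyclic and that a torsion-free subgroup of a finite cyclic group is trivial, one gets that $\Gamma$ is a free product of infinite cyclic groups, i.e., a free group. On the other hand $\Gamma$ is the fundamental group of the corresponding manifold cover of the closed $2$-orbifold whose orbifold fundamental group is $\mathcal{F}$, so $\Gamma$ is the fundamental group of a closed surface of nonpositive Euler characteristic (torus, Klein bottle, or a surface of higher genus), which is not free. This is the desired contradiction.

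\emph{Main obstacle.} The only non-combinatorial step is producing the torsion-free finite-index $\Gamma$ and identifying it as a nonfree closed surface group: this relies on the fact that infinite $F$-groups are Fuchsian or crystallographic and therefore virtually torsion-free by Selberg's lemma, with the torsion-free cover being a genuine closed surface. A self-contained substitute would be to observe that an infinite $F$-group acts cocompactly on $\mathbb{R}^2$ or $\mathbb{H}^2$, hence has one end, and then invoke Stallings' theorem on ends to preclude any nontrivial free product decomposition directly.
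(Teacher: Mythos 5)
Your argument is correct, and its first half --- observing that a non-trivial free product is infinite with both factors of infinite index, so that Proposition \ref{Fsubgrps} forces $A$, $B$, and hence $\mathcal{F}$ to be free products of cyclic groups --- is exactly the paper's. Where you diverge is in how you contradict that conclusion. The paper simply quotes the remark after Proposition (III)7.12 of Lyndon--Schupp: $\mathcal{F}$ contains a subgroup isomorphic to the fundamental group of a closed orientable surface of genus $\geq 1$, and no free product of cyclic groups contains such a subgroup (Kurosh). You instead produce a torsion-free subgroup $\Gamma$ of finite index, deduce from Kurosh that $\Gamma$ is free, and from the orbifold structure that $\Gamma$ is a closed surface group of non-positive Euler characteristic, hence not free. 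The two routes are close in spirit (the Lyndon--Schupp surface subgroup is itself obtained from a finite-index subgroup), but yours imports Selberg's lemma and the classification of closed $2$-orbifolds to manufacture $\Gamma$, whereas the paper gets the needed subgroup directly from its reference; your Stallings-ends alternative is more genuinely different, since it bypasses Proposition \ref{Fsubgrps} altogether, but it leans on the same geometric input (that infinite $F$-groups are Euclidean or hyperbolic orbifold groups, the bad and spherical orbifolds having finite groups). One cosmetic slip: $\langle a\rangle \ast \langle b\rangle$ need not be $F_2$ or $\mathbb{Z}_2 \ast \mathbb{Z}_2$ (e.g.\ it could be $\mathbb{Z}_2 \ast \mathbb{Z}_3$), though it is always infinite, which is all you actually use.
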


\begin{proof} If $\mathcal{F}=A*B$ with $A,B$ non-trivial, then $A$ and $B$ have infinite index and so, by Proposition \ref{Fsubgrps}, $A,B$ and $\mathcal{F}$ are free products of cyclic groups. However, $\mathcal{F}$ is not such a group since it contains a subgroup isomorphic to the fundamental group of an orientable closed surface of genus $\geq 1$ (see the remark after Proposition (III)7.12 in \cite{LS}).
\end{proof}

The following remark is easy to see.

\begin{remark}\label{finiteorder} If $\mathcal{F}\neq \mathbb{Z}_2$ then $\mathcal{F}$ has no elements of finite order if and only if $\mathcal{F}$ is a surface group.
 \end{remark} 
 
\begin{lem}\label{surfacegroup} If $M$ is an orientable (not necessarily closed or compact) $3$-manifold with $\pi_1 (M)\cong \mathcal{F}$ then $\pi_1 (M)$ is cyclic or a surface group.
\end{lem}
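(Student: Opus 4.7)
The plan is a case analysis on whether $\mathcal{F}$ has torsion. If $\mathcal{F}$ is torsion-free, Remark~\ref{finiteorder} immediately gives $\mathcal{F}\cong\mathbb{Z}_2$ (cyclic) or $\mathcal{F}$ a surface group, and we are done. So assume some $m_i\geq 2$, so that $\mathcal{F}$ contains an element of finite order; we will show that $\mathcal{F}$ must then be finite cyclic.

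Since $\mathcal{F}$ is finitely generated and $M$ is orientable, Scott's Compact Core Theorem supplies a compact, connected, orientable $3$-submanifold $N\subseteq M$ for which inclusion induces an isomorphism $\pi_1(N)\cong\pi_1(M)\cong\mathcal{F}$. Applying Kneser's prime decomposition to $N$ writes $\mathcal{F}$ as a free product $\pi_1(N_1)*\cdots*\pi_1(N_k)$, so by Lemma~\ref{product} we must have $k=1$, i.e.\ $N$ is prime. If $N\cong S^2\times S^1$ then $\mathcal{F}\cong\mathbb{Z}$, torsion-free, contradicting the standing assumption; hence $N$ is irreducible.

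Next we eliminate every remaining possibility except ``$N$ closed with $\mathcal{F}$ finite''. If $\partial N\neq\emptyset$, the classical theorem that a compact orientable irreducible $3$-manifold with non-empty boundary has torsion-free fundamental group (see, e.g., Hempel, \emph{3-Manifolds}) yields a contradiction. Therefore $N$ is closed. If $\mathcal{F}$ were infinite, Perelman's Geometrization Theorem would make the closed irreducible $3$-manifold $N$ aspherical, hence $\pi_1(N)$ torsion-free, once more a contradiction. Thus $\mathcal{F}$ is finite and $N$ is a spherical space form, so $\mathcal{F}$ embeds in $SO(4)$ and acts freely on $S^3$.

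Finally, Proposition~\ref{finiteF} says a finite $F$-group is either cyclic or contains a non-central element of order $2$. But the only orientation-preserving free involution of $S^3$ is the antipodal map $-I$, which is central in $SO(4)$; hence every involution in a finite subgroup of $\pi_1$ of a closed orientable $3$-manifold is central in that subgroup. This rules out Proposition~\ref{finiteF}(a) and leaves $\mathcal{F}$ cyclic, as required. The main technical obstacle is sequencing the deep $3$-manifold results (Scott core, Kneser, Hempel, Perelman) to reduce to the finite case; once that is done, the centrality observation eliminates the non-cyclic finite $F$-groups immediately.
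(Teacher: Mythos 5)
Your proof is correct and shares the paper's skeleton: Scott's core theorem to pass to a compact manifold, Lemma~\ref{product} to force irreducibility, Remark~\ref{finiteorder} to dispose of the torsion-free case, and Proposition~\ref{finiteF}(a) to rule out the finite non-cyclic case via a non-central involution. Where you genuinely diverge is in the external theorems supplying the two key steps. For the infinite case the paper invokes the single classical fact that a compact orientable irreducible $3$-manifold with infinite fundamental group is aspherical (covering the bounded and closed subcases uniformly, via the Sphere Theorem), whereas you split into a bounded case (Hempel) and a closed case for which you call on Geometrization; the latter is overkill, since asphericity there is pre-Perelman. For the finite case the paper quotes Milnor's theorem that every involution in a group acting freely on $S^3$ is central, which applies to an arbitrary free action; you instead use Perelman's elliptization to linearize the action and then note that a free involution in $SO(4)$ has no $+1$ eigenvalue, hence is $-I$ and central. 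Both routes work; yours trades a 1957 citation for the full Geometrization Theorem plus an elementary eigenvalue computation. Two small points to tidy: $\mathbb{Z}_2$ is not torsion-free, so in your first case the disjunct ``$\mathcal{F}\cong\mathbb{Z}_2$'' extracted from Remark~\ref{finiteorder} is vacuous and the conclusion is simply that $\mathcal{F}$ is a surface group; and in the prime decomposition step, concluding $k=1$ from Lemma~\ref{product} requires discarding simply connected prime summands, i.e.\ the Poincar\'e conjecture (or, as the paper does, first removing sphere boundary components and working with the free product decomposition of the group rather than of the manifold).
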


\begin{proof}We may assume that $\partial M$ contains no $2$-spheres. By Scott's Core Theorem we may assume that $M$ is compact and by Lemma \ref{product}  that $M$ is irreducible.

If $\pi_1 (M)$ is infinite then $M$ is aspherical (see e.g. \cite{AFW}). It follows that  $\pi_1 (M)$ is torsion-free and from Remark \ref{finiteorder} that $\pi_1 (M)$ is a surface group.

If $\pi_1 (M)$ is finite then $M$ is closed. If $\pi_1 (M)$ is also non-cyclic then by Proposition \ref{finiteF}, $\pi_1 (M)$ contains a non-central element of order $2$. This can not happen by Milnor \cite{M}.
\end{proof}

We now consider a $2$-stratifold $X_G$ with $\pi_1 (X_G)=\pi(\mathcal{G})$ as in section 3.\\

Up to conjugacy, the only elements of finite order of $\pi_1 (X_G)$ are contained in the vertex groups; they correspond to black vertices of finite order and elements of white vertices $w$ whose corresponding group in $\mathcal{G}$ is finite. The latter are described in Proposition \ref{finiteF}. It is also shown in \cite{LS} (proof of Proposition (III)7.12) that in an infinite F-group the only elements of finite order are the obvious ones, namely conjugates of powers of $c_1,\dots ,c_p$.\\

For a group $H$, denote by $QH$ be the quotient group of $H$ modulo the smallest  subgroup of $H$ containing all elements of finite order of $H$.\\

Let $w$ be a white vertex in $G_X$. We say that $w$ is a {\it white hole}, if $w$ has label $-1$, all of its (black) neighbors have finite order and at most one of its neighbors has order $>1$.

If $G_X$ has more than one vertex, note that $Q\pi_1 (X_G)$ is obtained from $\pi_1 (X_G)$ by killing the open stars of all the black vertices representing elements of finite order $\geq 1$ of $\pi_1 (X_G)$ and deleting the white holes. In the example of Figure 1, when genus $g=-1$ (and so $n=1$), $Q\pi_1 (X_G)=\mathbb{Z}_2$. (Note that the white vertex of genus $-1$ is not a white hole if $p\geq 2, m_i >1$).

\begin{prop}\label{HX} If $Q(\pi_1 (X_G))$ has no elements of order $2$, then $H_3 (Q\pi_1 (X_G))=0$.
\end{prop}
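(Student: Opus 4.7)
The plan is to realize $Q\pi_1(X_G)$ as the fundamental group of an auxiliary graph of groups $\mathcal{G}^Q$ whose vertex groups are all torsion-free and whose edge groups are all infinite cyclic, and then to read off $H_3 = 0$ from the Mayer--Vietoris long exact sequence for a graph of groups.

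First I would construct $\mathcal{G}^Q$ from the graph of groups $\mathcal{G}$ of Section 3 by removing every finite-order black vertex (with its incident edges) and every white hole. At each surviving white vertex $w$ I would take the new vertex group to be $\pi_1(W')$, where $W'$ is obtained from $W$ by capping off, with a disc, each boundary circle that was attached to a deleted black vertex; the surviving (infinite-order) black vertices keep group $\mathbb{Z}$, and all edge groups remain $\mathbb{Z}$. The identification $\pi_1(\mathcal{G}^Q) \cong Q\pi_1(X_G)$ would follow by reading off the presentations of Section 3: killing a finite-order generator $b$ forces $s_i = b^{m_i} = 1$ at each adjacent white vertex (matching the capping), a white-hole vertex group is finite cyclic by a direct calculation (namely $\mathbb{Z}_{2k}$ with $k$ the order of the distinguished neighbor, or $\mathbb{Z}_2$ if all neighbors are trivial) so deleting it amounts to killing torsion, and by the remark following Proposition \ref{finiteF} that in an infinite $F$-group the only torsion elements are conjugates of powers of the $c_i$, no further hidden torsion survives.

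Second, I would check that every vertex group of $\mathcal{G}^Q$ is torsion-free. A surviving white-vertex group $\pi_1(W')$ is free if $W'$ retains any boundary component, and is the fundamental group of a closed surface otherwise. Among closed surfaces, only $\mathbb{RP}^2$ has torsion in $\pi_1$, and this occurs precisely when $w$ has genus $-1$, all neighbors of finite order, and at least two of those of order $>1$---if at most one has order $>1$, then $w$ is a white hole and has already been deleted. In the remaining case, the vertex group would be $\mathbb{Z}_2$, embedding in $\pi_1(\mathcal{G}^Q) = Q\pi_1(X_G)$ and producing an element of order $2$, contradicting the hypothesis. Hence every vertex group of $\mathcal{G}^Q$ is free, infinite cyclic, or the fundamental group of an aspherical closed surface of genus $\neq -1$, and in particular has $H_3 = 0$.

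Finally, the Bass--Serre tree of $\mathcal{G}^Q$ is contractible and $1$-dimensional, so the associated equivariant spectral sequence collapses to the long exact Mayer--Vietoris sequence
\[
\cdots \to \bigoplus_{e} H_n(G_e) \to \bigoplus_{v} H_n(G_v) \to H_n(Q\pi_1(X_G)) \to \bigoplus_{e} H_{n-1}(G_e) \to \cdots
\]
Each edge group is $\mathbb{Z}$ with $H_2 = H_3 = 0$, and by the previous step each vertex group has $H_3 = 0$, so the portion around $H_3$ reads $0 \to H_3(Q\pi_1(X_G)) \to 0$, forcing $H_3(Q\pi_1(X_G)) = 0$. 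The main obstacle will be the identification $\pi_1(\mathcal{G}^Q) \cong Q\pi_1(X_G)$ in the first step, which requires careful bookkeeping with the presentations (including checking that the non-white-hole white vertices with finite F-group vertex groups, i.e.\ the dihedral and polyhedral cases of Proposition \ref{finiteF}, also collapse harmlessly since all of their neighbors are of finite order and are therefore killed); once this is in hand, the rest of the argument is routine.
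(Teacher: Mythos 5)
Your proposal is correct and follows essentially the same route as the paper: both delete the finite-order black vertices and the white holes, identify $Q\pi_1(X_G)$ with the fundamental group of the resulting (free product of) graph(s) of groups, and use the no-$2$-torsion hypothesis to rule out the one closed-surface vertex space, $P^2$, whose group has torsion. The only real difference is the last step, where the paper deduces $H_3=0$ from asphericity of the pieces (cohomological dimension $\leq 2$, via Scott--Wall) while you read it off the Mayer--Vietoris sequence of the graph of groups; these are interchangeable.
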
 

\begin{proof} Let $G'$ be the labelled subgraph of $G_X$ obtained  by deleting the open stars of all black vertices representing elements of finite order of $\pi_1 (X_{G})$ and all white holes.  ($\pi_1 (X_{\emptyset}) = 1$ by definition). Let $C$ be a component of $G'$. Then $Q\pi(X_{G}) = L* (*_C (\pi(X_C))$, the free product of a free group $L$ with the free product of the $\pi(X_C )$ where $C$ runs over the components of $G '$.

If $C$ consists of only one (white) vertex, then $X_{C}$ is a closed 2-manifold, different from $P^2$, since by assumption $Q(\pi_1 (X_G))$ has no elements of order $2$. We may ignore the $C$'s consisting of spheres, since they do not contribute to $Q\pi(X_{G})$. (A nonseparating $2$-sphere only changes the rank of $L$). In all other cases $X_C$ is the total space of a bicolored graph of spaces with white vertex spaces $2$-manifolds with boundary, edge spaces circles, and black vertex spaces homotopy equivalent to circles. 

Thus every vertex and edge space of $X_C$ is aspherical (with free fundamental group) of dimension $\leq 2$. By Proposition 3.6 (ii) of \cite{SW}, $X_C$ is aspherical. It follows that
$Q\pi_1 (X_{G})$ has (co)homological dimension $\leq 2$ and so $H_3 (Q\pi_1 (X_{G}))=0$.
\end{proof}

The assumption that $Q(\pi_1 (X_G))$ has no elements of finite order is satisfied if $\pi_1 (X_G)$ is a $3$-manifold group: We claim that $Q\pi_1 (M)$ is torsion free if $M$ is a closed orientable $3$-manifold.

For let $M=M_1 \# \dots \# M_k$ be its prime decomposition. If $M_i$ is irreducible with infinite fundamental group, then $M_i$ is aspherical and so $\pi_1 (M_i)$ is torsion free; if $M_i$ has finite fundamental group, then $Q\pi_1 (M_1)=1$. Now the claim follows since $Q\pi_1 (M)=Q\pi_1 (M_1 )*\dots *Q\pi_1 (M_k )$. 

 \begin{lem} \label{finite} Let $M$ be a closed orientable $3$-manifold with prime decomposition $M=M_1 \# \dots \# M_k$. If $\pi_1 (M)\cong \pi_1 (X_{G})$, then each $\pi_1 (M_i )$ is infinite cyclic or finite. 
\end{lem}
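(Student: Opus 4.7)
The plan is to combine the $H_3$-vanishing statement of Proposition \ref{HX} with a homological computation on the prime decomposition.

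First, I would invoke the remark preceding the lemma: for any closed orientable $3$-manifold $M$, the quotient $Q\pi_1(M) \cong Q\pi_1(M_1) * \cdots * Q\pi_1(M_k)$ is torsion-free. Since $\pi_1(X_G) \cong \pi_1(M)$, in particular $Q\pi_1(X_G)$ has no element of order $2$, so Proposition \ref{HX} applies and yields $H_3(Q\pi_1(X_G)) = H_3(Q\pi_1(M)) = 0$.

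Next, I would identify each $Q\pi_1(M_i)$ case by case. If $\pi_1(M_i)$ is finite, then $Q\pi_1(M_i) = 1$. If $M_i = S^2 \times S^1$, then $\pi_1(M_i) \cong \mathbb{Z}$ is already torsion-free, so $Q\pi_1(M_i) \cong \mathbb{Z}$ and $H_3(\mathbb{Z}) = 0$. In the remaining case $M_i$ is irreducible with $|\pi_1(M_i)| = \infty$; standard $3$-manifold theory (sphere theorem plus Hurewicz) forces $M_i$ to be aspherical, so $\pi_1(M_i)$ is torsion-free, hence $Q\pi_1(M_i) = \pi_1(M_i)$, and $H_3(\pi_1(M_i)) \cong H_3(M_i;\mathbb{Z}) \cong \mathbb{Z}$ by orientability.

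Finally, using that $K(A_1 * \cdots * A_k, 1) \simeq K(A_1,1) \vee \cdots \vee K(A_k,1)$, homology of a free product splits as $H_3(Q\pi_1(M)) \cong \bigoplus_i H_3(Q\pi_1(M_i))$. Each aspherical infinite-$\pi_1$ prime summand would contribute a $\mathbb{Z}$, while the other two cases contribute $0$. The vanishing supplied by Proposition \ref{HX} thus rules out any such summand, and every $\pi_1(M_i)$ is either finite or infinite cyclic. The main obstacle, if any, is just the bookkeeping: verifying that Proposition \ref{HX} does apply (because $Q\pi_1(X_G)$ inherits torsion-freeness from the orientable $3$-manifold $M$) and recalling that $H_3$ of a closed aspherical orientable $3$-manifold coincides with $H_3$ of its fundamental group. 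Once those two points are in place, the argument is a one-line homology computation.
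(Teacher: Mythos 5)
Your proposal is correct and follows essentially the same route as the paper: the paper likewise uses the torsion-freeness of $Q\pi_1(M)$ to invoke Proposition \ref{HX}, notes that any prime summand with infinite non-cyclic fundamental group is irreducible and aspherical so that $H_3(Q\pi_1(M_i))=H_3(M_i)\neq 0$, and derives a contradiction from the free-product decomposition of $Q\pi_1(M)$. Your version merely spells out the bookkeeping (the $S^2\times S^1$ and finite cases, and the splitting of $H_3$ over the free product) that the paper leaves implicit.
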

\begin{proof} If there is some $M_i$ with $\pi_1 (M_i )\neq \mathbb{Z}$, then $M_i$ is irreducible. If $\pi_1 (M_i )$ is infinite then $M_i$ is aspherical and hence $H_3 (Q\pi_1 (M_i ))=H_3 (\pi_1 (M_i ))=H_3 (M_i )\neq 0$. Since $Q\pi_1 (M)=Q\pi_1 (M_1 )*\dots *Q\pi_1 (M_k )$ it follows that $H_3 (Q\pi_1 (M ))\neq 0$, which contradicts Proposition \ref{HX}.
\end{proof}

\begin{lem} \label{finitecyclic} Let $M$ be a closed  $3$-manifold and suppose $\pi_1 (M) = \pi_1 (X_{G})$. Then any finite subgroup $H$ of $\pi_1 (M)$ is cyclic.
 \end{lem}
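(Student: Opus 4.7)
The plan is to use the graph-of-groups decomposition $\mathcal{G}$ with $\pi_1(\mathcal{G}) \cong \pi_1(X_G) \cong \pi_1(M)$ from Section~3 and reduce the question to finite subgroups of vertex groups. By Bass-Serre theory, every finite subgroup of the fundamental group of a graph of groups fixes a vertex of the associated Bass-Serre tree, and so is conjugate into some vertex group $V$ of $\mathcal{G}$. Thus it suffices to show that every finite subgroup of every vertex group is cyclic.

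Black vertex groups are cyclic by construction, so if $V$ is black then $H$ is cyclic. Now suppose $V = G_w$ is a white vertex group. By the analysis in Section~3, $G_w$ is either a free product of cyclic groups or an F-group. In the first case the Kurosh subgroup theorem forces $H$ to be conjugate into a single cyclic free factor, so $H$ is cyclic. If $G_w$ is an \emph{infinite} F-group, then by Proposition~(III)7.12 of \cite{LS} every torsion element of $G_w$ is a conjugate of a power of some generator $c_i$; combined with the standard fact that an infinite F-group acts properly on $\mathbb{H}^2$ or $\mathbb{E}^2$ with cyclic point-stabilizers, any finite subgroup fixes a point and therefore lies in a cyclic isotropy group $\langle c_i\rangle$, so $H$ is cyclic.

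The remaining case is that $G_w$ is a \emph{finite} F-group. If $G_w$ is cyclic there is nothing to prove. Otherwise Proposition~\ref{finiteF}(a) shows that $G_w$ is dihedral, tetrahedral, octahedral, or dodecahedral, and that the generator $c_1$ is a non-central element of order $2$. But then $G_w$ is a finite subgroup of the closed $3$-manifold group $\pi_1(M)$ containing a non-central involution, which contradicts Milnor \cite{M} in precisely the same manner as in the proof of Lemma~\ref{surfacegroup}. Hence the finite non-cyclic F-group case cannot arise, and $H$ is cyclic in every case.

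The principal obstacle I anticipate is executing the Milnor step uniformly when $M$ may be non-orientable, since Milnor's theorem applies most directly to free actions on $S^3$ and hence to finite subgroups of closed orientable $3$-manifold groups. For non-orientable $M$ I would pass to the orientation double cover $\widetilde{M}$: either $G_w \le \pi_1(\widetilde{M})$ and Milnor applies directly, or $\widetilde{G}_w := G_w \cap \pi_1(\widetilde{M})$ has index $2$ in $G_w$, in which case a short case analysis of the index-$2$ subgroups of $D_{2m}$, $A_4$, $S_4$, $A_5$ shows that either $\widetilde{G}_w$ still carries a non-central involution (yielding the desired contradiction in $\pi_1(\widetilde{M})$) or that the extension structure of $G_w$ over $\widetilde{G}_w$ is incompatible with $G_w$ embedding in a closed $3$-manifold group. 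Equivalently, one can appeal to geometrization to see that any finite subgroup of $\pi_1(M)$ acts freely on $S^3$ via $SO(4)$, after which Milnor's list of finite subgroups of $SO(4)$ directly excludes the non-cyclic finite F-groups.
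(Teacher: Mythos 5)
Your proposal follows essentially the same route as the paper: reduce to the vertex groups via the action on the Bass--Serre tree (the paper gets the same reduction from Corollary 3.8 and the remark after Theorem 3.7 of \cite{SW}, using that a finite group is non-splittable), dispose of the cyclic and free-product-of-cyclics vertex groups, and kill the finite non-cyclic $F$-groups using Proposition \ref{finiteF}(a) and Milnor. The paper's write-up is more economical at the last step: instead of splitting into infinite versus finite $F$-groups, it notes that a non-cyclic finite subgroup $H$ of an $F$-group must have finite index (a finite free product of cyclic groups is cyclic), hence is itself an $F$-group by Proposition \ref{Fsubgrps}, and then invokes Lemma \ref{surfacegroup}, which already packages the passage to an irreducible orientable piece together with the Milnor argument. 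One caution about your explicit treatment of the non-orientable case: for $H=D_{2m}$ the intersection $H\cap\pi_1(\widetilde M)$ may well be the cyclic rotation subgroup $\mathbb{Z}_m$, which carries no non-central involution, so the first branch of your dichotomy yields no contradiction, and the claimed ``incompatible extension structure'' is asserted rather than proved; to close that case one needs something like Epstein's theorem that a non-orientable $3$-manifold cannot have finite fundamental group of order $>2$ (or the geometrization appeal you offer as a fallback, which does suffice but is much heavier). To be fair, the paper's own citation of Lemma \ref{surfacegroup}, which is stated only for orientable manifolds, quietly elides the same point.
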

 
\begin{proof} $\pi_1 (X_{G}) \cong\pi_1 (\mathcal{G})$ where $\mathcal{G}$ is a graph of groups in which the groups of black vertices are cyclic and the groups of white vertices are $F$-groups or free products of finitely many cyclic groups. The finite group $H$ is non-splittable (i.e. not a non-trivial HHN extension or free product with amalgamation). By Corollary 3.8 and the Remark after Theorem 3.7 of \cite{SW}, $H$ is a cyclic group or isomorphic to a subgroup of an $F$-group. If $H$ is not cyclic, then (since $H$ is not a non-trivial free product of cyclic groups), $H$ is itself an $F$-group by Proposition \ref{Fsubgrps}. Since $H$ is a $3$-manifold group it follows from Lemma \ref{surfacegroup} that this case can not occur.
\end{proof}

\begin{cor} \label{orientable} Let $M$ be a closed orientable $3$-manifold. If $\pi_1 (M) \cong \pi_1 (X_{G})$, then $\pi_1 (M)$ is a free product of cyclic groups.
\end{cor}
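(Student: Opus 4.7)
The plan is to combine the two preceding lemmas with the classical prime decomposition theorem for closed orientable $3$-manifolds, after which the statement falls out almost mechanically.

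First I would invoke the prime decomposition $M = M_1 \# \cdots \# M_k$. Since $M$ is orientable, Kneser's theorem together with Van Kampen gives
\[
\pi_1(M) \cong \pi_1(M_1) * \cdots * \pi_1(M_k),
\]
so it suffices to show that each free factor $\pi_1(M_i)$ is cyclic.

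Next I would apply Lemma \ref{finite} to the hypothesis $\pi_1(M) \cong \pi_1(X_G)$: each $\pi_1(M_i)$ is either infinite cyclic or finite. Infinite cyclic factors require no further work. For the finite ones, I would observe that $\pi_1(M_i)$ sits as a subgroup (in fact a free factor) of $\pi_1(M) \cong \pi_1(X_G)$, so Lemma \ref{finitecyclic} applies and forces $\pi_1(M_i)$ to be cyclic.

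Combining the two cases, every factor $\pi_1(M_i)$ is cyclic, so $\pi_1(M)$ is a free product of cyclic groups, as claimed. There is essentially no obstacle here beyond assembling the ingredients correctly; the only thing to be slightly careful about is confirming that the finite factor $\pi_1(M_i)$ really does appear as a subgroup of $\pi_1(M)$ (which it does, as a free factor via the Grushko-type decomposition induced by the connected sum), so that Lemma \ref{finitecyclic} is legitimately applicable.
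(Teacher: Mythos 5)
Your proposal is correct and is precisely the argument the paper intends: its proof of the corollary is the one-line statement that it follows from Lemmas \ref{finite} and \ref{finitecyclic}, and your elaboration (prime decomposition, free-product splitting of $\pi_1$, Lemma \ref{finite} to make each factor infinite cyclic or finite, Lemma \ref{finitecyclic} to make the finite factors cyclic) is exactly how those two lemmas combine. No discrepancy with the paper's approach.
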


\begin{proof} This follows from Lemmas \ref{finite} and \ref{finitecyclic}.
\end{proof}

\begin{thm} \label{piM}Let $M$ be a closed $3$-manifold. If $\pi_1 (M) \cong \pi_1 (X_{G}$), then $\pi_1 (M)$ is a free product of groups, where each factor is cyclic or $\mathbb{Z}{\times}\mathbb{Z}_2$.
\end{thm}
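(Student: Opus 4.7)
The plan is: since Corollary \ref{orientable} already settles the orientable case, I will treat the non-orientable case by taking the prime decomposition $M = M_1 \# \cdots \# M_k$, so that $\pi_1(M) = \pi_1(M_1) * \cdots * \pi_1(M_k)$, and showing that each $\pi_1(M_i)$ is cyclic or $\mathbb{Z}\times\mathbb{Z}_2$. The key tool will be Proposition \ref{HX}, together with the observation (established in its proof) that $Q\pi_1(X_G)$ in fact has cohomological dimension at most $2$, not merely vanishing $H_3$.

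First I would verify the hypothesis of Proposition \ref{HX}, namely that $Q\pi_1(M)$ is torsion-free. Since $Q$ commutes with free products, this reduces to a case check on possible prime factors $M_i$: an orientable irreducible $M_i$ with infinite $\pi_1$ is aspherical with torsion-free $\pi_1$; an orientable $M_i$ with finite $\pi_1$ has $Q\pi_1(M_i)=1$; an $S^2$-bundle over $S^1$ has $\pi_1(M_i) = \mathbb{Z}$; and for a non-orientable irreducible $M_i$, Epstein's theorem on two-sided $\mathbb{R}P^2$'s forces $M_i$ to be either $P^2$-irreducible (hence aspherical with torsion-free $\pi_1$) or $P^2\times S^1$, for which $Q(\mathbb{Z}\times\mathbb{Z}_2) = \mathbb{Z}$. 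Thus in every case $Q\pi_1(M_i)$, and therefore $Q\pi_1(M)$, is torsion-free.

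Next I apply Proposition \ref{HX} to conclude $\mathrm{cd}\, Q\pi_1(M) \leq 2$. Because cohomological dimension of a free product equals the supremum of the cohomological dimensions of its factors, each $Q\pi_1(M_i)$ likewise has $\mathrm{cd}\leq 2$. But a closed aspherical $3$-manifold (orientable or not) has $\mathrm{cd}(\pi_1) = 3$, so no $M_i$ can be aspherical. The remaining non-aspherical prime closed $3$-manifolds are precisely the two $S^2$-bundles over $S^1$, $P^2 \times S^1$, and closed $3$-manifolds with finite $\pi_1$; the latter are spherical space forms and hence orientable, and their $\pi_1$ is cyclic by Lemma \ref{finitecyclic}. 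Consequently each $\pi_1(M_i)$ is cyclic or $\mathbb{Z}\times\mathbb{Z}_2$, which yields the theorem.

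The main obstacles are external inputs rather than new machinery: Epstein's classification of irreducible closed $3$-manifolds containing a two-sided $\mathbb{R}P^2$, and the sharpening of Proposition \ref{HX} from $H_3=0$ to $\mathrm{cd}\leq 2$ (which the given proof actually supplies, since it factors the argument through the asphericity of a $2$-dimensional complex). Once these are in place, the only new prime factor that must be accommodated beyond Corollary \ref{orientable} is $P^2\times S^1$, whose fundamental group $\mathbb{Z}\times\mathbb{Z}_2$ is exactly the extra free factor permitted in the statement.
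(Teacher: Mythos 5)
Your route is genuinely different from the paper's. The paper handles the non-orientable case by passing to the orientable double cover $\tilde M$, applying Corollary \ref{orientable} there to get that $\pi_1(\tilde M)$ is a free product of cyclic groups, and then analyzing how each prime factor of $M$ lifts; the one delicate case (a non-orientable irreducible $M_i$ containing two-sided $P^2$'s but not equal to $P^2{\times}S^1$) is killed by Swarup's Proposition (2.2), which splits $M_i$ along projective planes into pieces whose lifts have indecomposable, torsion-free, non-$\mathbb{Z}$ fundamental groups --- impossible inside a free product of cyclic groups. You instead work downstairs, upgrading Proposition \ref{HX} to the statement $\mathrm{cd}\,Q\pi_1(X_G)\leq 2$ (which its proof does deliver, via asphericity of the $2$-complexes $X_C$) and using it to exclude aspherical prime factors; this is a nice observation, and it correctly sidesteps the fact that $H_3(M_i;\mathbb{Z})=0$ for non-orientable aspherical $M_i$, where the integral-homology argument of Lemma \ref{finite} would not apply.

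The weak point is the step you attribute to ``Epstein's theorem'': that a closed non-orientable irreducible $3$-manifold is either $P^2$-irreducible or homeomorphic to $P^2{\times}S^1$ (equivalently, that $P^2{\times}S^1$ is the only closed irreducible $3$-manifold containing a two-sided projective plane). This is not a theorem of Epstein; it was a well-known open conjecture --- indeed it is the reason Swarup's paper exists --- and it is settled only as a consequence of the Geometrization Theorem. You use it twice: once to see that $Q\pi_1(M)$ is torsion-free (so that Proposition \ref{HX} applies at all), and again when you assert that the non-aspherical prime closed $3$-manifolds are exactly the $S^2$-bundles over $S^1$, $P^2{\times}S^1$, and the manifolds with finite fundamental group. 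As a result your argument is correct only modulo geometrization, whereas the paper's proof of this theorem is unconditional precisely because it replaces this classification by Swarup's splitting result. To repair your version without geometrization you would need to show directly that a hypothetical irreducible, non-orientable, non-$P^2$-irreducible prime factor other than $P^2{\times}S^1$ forces either $2$-torsion in $Q\pi_1(M)$ or $\mathrm{cd}>2$; the cleanest way to do that is exactly the paper's detour through the orientable double cover and Swarup.
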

\begin{proof} If M is orientable this is Corollary \ref{orientable} (with each factor cyclic). Thus assume $M$ is non-orientable and let  $p:{\tilde M}\to M$ be the $2$-fold orientable cover of $M$. Then 
$\pi_1 ({\tilde M}) = \pi_1 (X_ {\tilde{G}})$ for the 2-stratifold $X_{\tilde{G}}$, which
is the 2-fold cover of $X_{G}$ corresponding to the orientation subgroup of $\pi_1 (M)$. Hence $\pi_1 ({\tilde M})$ is a free product of cyclic groups.

Let $M={\hat M_1} \# \dots \# {\hat M_k}$ be a prime decomposition of $M=M_1 \cup \dots \cup M_k$. If $M_i$ is orientable, then $M_i$ lifts to two homeomorphic copies ${\tilde M}_{i1}$, ${\tilde M}_{i2} $ of $M_i$, with each ${\hat{\tilde M}}_{ij}$ a factor of the prime decomposition of ${\tilde M}$ and it follows that $\pi_1 (M_i )$ is cyclic.

If ${\hat M_i}$ is non-orientable and $P^2$-irreducible, then $M_i$ lifts to  ${\tilde M}_i$, where ${\hat{\tilde M}}_i$ is irreducible. Then $\pi_1 ({\hat{\tilde M}}_i )$, being a factor of the free product decomposition of $\pi_1 ({\tilde M})$, is finite cyclic, which can not occur since $\pi_1 (M_i )$ is infinite.

If ${\hat M}_i$ is non-orientable irreducible, contains $P^2$'s, but is not  $P^2{\times}S^1$, then by Proposition (2.2) of \cite{S}, $M_i$ splits along two-sided $P^2$'s into $3$-manifolds $N_1 ,\dots ,N_m$ such that the fundamental group of the lifts ${\tilde N_i}$ is indecomposable, torsion free and not isomorphic to $\mathbb{Z}$. Since $\pi_1 ({\tilde N_i})$ is a factor of the free product decomposition of $\pi_1 ({\tilde M})$, this can not happen.

Therefore each non-orientable $M_i$ is either the $S^2$-bundle over $S^1$ or $P^2{\times}S^1$, which proves the Theorem.
\end{proof}
\section{Realizations of spines.}

Recall that a subpolyhedron $P$ of a $3$-manifold $M$ is a {\it spine} of $M$, if $M-Int(B^3 )$ collapses to $P$, where $B^3$ is a $3$-ball in $M$. 

An equivalent definition is that $M-P$ is homeomorphic to an open $3$-ball (Theorem 1.1.7 of \cite{Ma}).\\

We first construct $2$-stratifold spines of lens spaces (different from $S^3$), the non-orientable $S^2$-bundle over $S^1$, and $P^2{\times}S^1$.\\

\begin{ex} Lens space $L(0,1)=S^3$.

$S^3$ does not have a $2$-stratifold spine. Otherwise such a spine $X$ would be a deformation retract of the $3$-ball and therefore contractible. However there are no contractible 2-stratifolds \cite{GGH1}. \end{ex}

\begin{ex} Lens spaces $L(p,q)$ with $q\neq 0, 1$.

 Let $r$ be the rotation of the disk $D^2$ about its center $c$ with angle $2\pi p/q$, let $1\in S^1 \subset D^2$ and let $x_i =r^{i-1} (1)$, $i=1, \dots , q$. Let $Y\subset D^2$ be the cone of $\{x_1, \dots ,x_q\}$ with cone point $c$. Embed $Y{\times}I/(x_i ,0)\sim (x_{i+1},1)$ into the solid torus $V=D^2{\times}I/(x,0)\sim (r(x),1)$. The punctured lens space $L(p,q)$ is obtained from $V$ by attaching a $2$-handle $D{\times}I$ with $\partial D$ attached to the boundary curve of $(Y{\times}I)/_{\sim}$. Then $L(p,q)$ deformation retracts to $(Y{\times}I)/_{\sim} \cup D$, which is the $2$-stratifold with one white vertex of genus $0$, one black vertex, and one edge with label $q$.\end{ex}
 
\begin{ex} Lens space $L(1,0)=S^2{\times}S^1$ and non-orientable $S^2$-bundle over $S^1$.
 
 Consider $S^2{\tilde{\times}}S^1$, the non-orientable $S^2$-bundle over $S^1$, as as the quotient space $q(S^2{\times}I)$ under the quotient map $q:S^2{\times}I\to S^2{\times}S^1$ that identifies $(x,0)$ with $(x,1)$, $x\in S^2$.
 
 Let $D_0 \subset S^2{\times}\{0\}$ be a disk and $B_1$ be the $3$-ball $D_0{\times}I \subset S^2{\times}I$, let $D_1$ be the disk $B_1 \cap S^2{\times}\{1\}$, let $A$ be the annulus $\partial B_1 -(Int(D_0 ) \cup Int(D_1 ))$, and let $B_2$ be the ball $S^2{\times}I-Int(B_1 )$, see Figure 2. 
 
 \begin{figure}[ht]
\begin{center}
\includegraphics[width=4in]{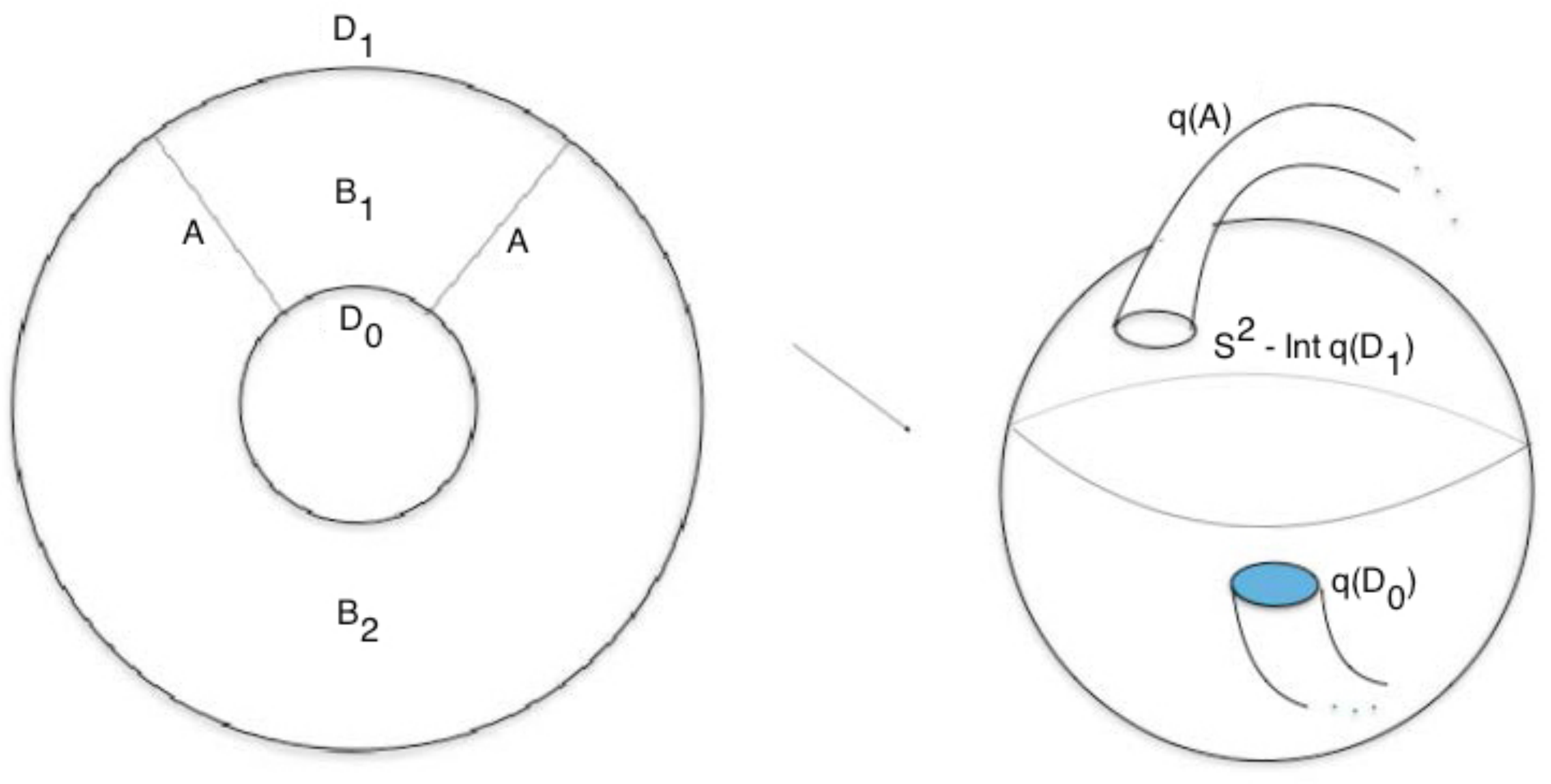} 
\end{center}
\caption{$S^2{\tilde{\times}}S^1 -Int(B_2 )\searrow S^1{\tilde{\times}}S^1\cup D^2 $}
\end{figure}
 Then $S^2{\times}I-Int(B_2 )=S^2{\times}\{0\}\cup B_1 \cup S^2{\times}\{1\}$ and $S^2{\tilde{\times}}S^1 -Int(B_2 )=q(S^2{\times}I-Int(B_2 ))=S^2 \cup q(B_1 )$, where $S^2 =q(S^2{\times}\{0\})=q(S^2{\times}\{1\})$. Collapsing the ball $q(B_1 )$ across the free face $q(D_1 )$ onto $q(A)\cup q(D_0 )$ we obtain a collapse of $S^2{\tilde{\times}}S^1 -Int(B_2 )$ onto $(S^2 -Int(q(D_1 )))\cup q(A)$, which is a Kleinbottle with a disk attached.  This is a $2$-stratifold $X_G$  with graph $G_X$ in Figure 3(a). (The white vertices have genus $0$).\\

 A similar construction, considering $S^2{\times}S^1$ as the obvious quotient space of $q:S^2{\times}I\to S^2{\times}S^1$ and first isotoping the ball $B_1$ such that $D_0 \cap D_1 = \emptyset $, we obtain a collapse of $S^2{\times}S^1 -Int(B_2 )$ onto a torus with a disk attached. This is a $2$-stratifold $X_G$  with graph $G_X$ in Figure 3(b). \end{ex}
  \begin{figure}[h]
\begin{center}
\includegraphics[width=4in]{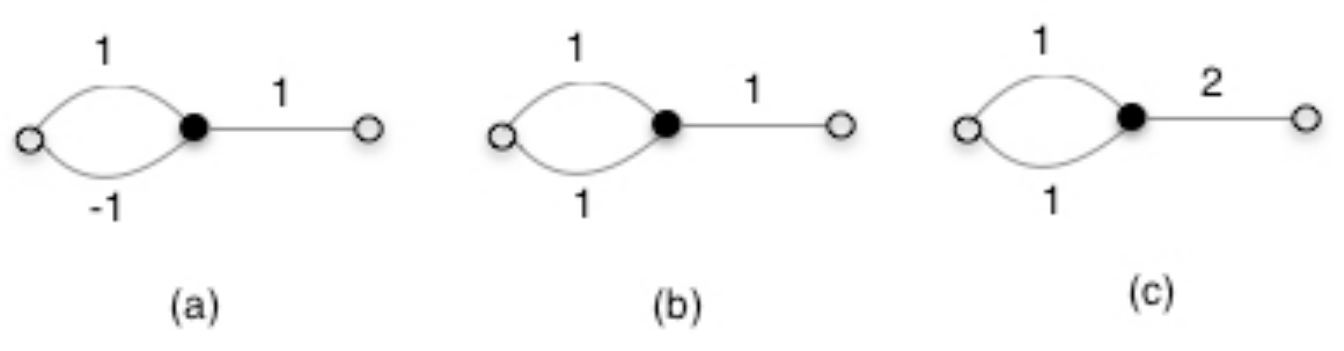} 
\end{center}
\caption{$2$-stratifold spines of punctured $S^2{\tilde\times}S^1$ and $P^2{\times}S^1$ }
\end{figure}

\begin{ex} $P^2{\times}S^1$.
  
For a one-sided simple closed curve $c$ in $P^2$ and a point $t_0$  in $S^1$  let $X=P^2{\times}\{t_0\}\cup c{\times}S^1 \subset  P^2{\times}S^1$. Observe that the boundary of a regular neighborhood $N$ of $X$ in $P^2{\times}S^1$ is a $2$-sphere. Since $P^2{\times}S^1$ is irreducible, $\partial N$ bounds a $3$-ball $B^3$ and therefore $P^2{\times}S^1 -Int(B^3 )=N$, which collapses onto $X=X_G$, a $2$-stratifold  with graph in Figure 3(c). \end{ex}

\begin{prop}\label{disksum} If the closed $3$-manifold $M_i \,(i=1,2)$ has a $2$-stratifold spine and $M$ is a connected sum of $M_1$ and $M_2$, then $M$ has a $2$-stratifold spine.
\end{prop}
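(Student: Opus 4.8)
The plan is to build a spine of $M = M_1 \# M_2$ by fusing the two given spines along a single disk, verifying the fusion through the characterization recalled above: a polyhedron $P \subset N$ is a spine of a closed $3$-manifold $N$ exactly when $N - P$ is an open $3$-ball.

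First I would set up notation. Let $X_i$ be a $2$-stratifold spine of $M_i$, so $U_i := M_i - X_i$ is an open $3$-ball. Since $X_i$ is genuinely $2$-dimensional it has a nonempty white $2$-stratum; choose a disk $F_i$ in the interior of such a stratum $W_i$. Near an interior point of $F_i$ the complex $X_i$ is locally a $2$-disk with both local sides lying in the single complementary ball $U_i$, so on one chosen side I can take a collar $B_i \cong F_i \times [0,1]$ with $F_i = F_i \times \{0\} \subset W_i$, $\ B_i \cap X_i = F_i$, and $\mathrm{Int}\, B_i \subset U_i$. Thus $B_i$ is an embedded $3$-ball whose boundary sphere is the union of the two disks $F_i$ and $G_i := (F_i \times \{1\}) \cup (\partial F_i \times [0,1])$, meeting along $\partial F_i$, with $F_i \subset X_i$ and $G_i \subset U_i$.

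Next I would form the connected sum by deleting these balls, $M := (M_1 - \mathrm{Int}\, B_1) \cup_\phi (M_2 - \mathrm{Int}\, B_2)$, where $\phi \colon \partial B_1 \to \partial B_2$ is a homeomorphism (orientation-reversing if orientations are present) carrying $F_1$ onto $F_2$ and $G_1$ onto $G_2$, and set $X := X_1 \cup_\phi X_2$, the two spines with the disks $F_1, F_2$ identified. There are then two claims. For the first, that $X$ is a $2$-stratifold: away from $\partial F_1 = \partial F_2$ nothing has changed, while along the circle $\gamma := \partial F_1 = \partial F_2$ exactly three sheets come together, namely the common disk $F := F_1 = F_2$ and the two punctured white regions $W_1 - \mathrm{Int}\, F_1$ and $W_2 - \mathrm{Int}\, F_2$. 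Hence $\gamma$ is a new simple closed branch curve, and $X$ is a connected closed $2$-stratifold whose branch curves are those of $X_1$ and $X_2$ together with $\gamma$.

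The heart of the argument is the second claim, that $M - X$ is an open $3$-ball. I would compute it piecewise. Because $\mathrm{Int}\, B_i \subset U_i$, deleting $\mathrm{Int}\, B_i$ leaves $X_i$ intact and replaces $U_i$ by $V_i := U_i - \mathrm{Int}\, B_i$, an open $3$-ball with a single boundary dent (hence still an open ball), the dent wall being the disk $G_i$. In $M - X$ the faces $F_1, F_2$ of the two spheres belong to the spine and so are absent, whence $V_1$ and $V_2$ are glued only along the single disk $G_1 \cong G_2$; a $3$-ball glued to a $3$-ball along one boundary disk each is again a $3$-ball, so $M - X$ is an open $3$-ball and $X$ is a spine of $M$. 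I expect the main obstacle to be precisely this complement computation: the gluing must occur along a \emph{single} disk. A more symmetric attempt — connecting $X_1$ and $X_2$ by a tube, or by a collar taken on \emph{both} sides of $F_i$ — would identify the two complementary balls along \emph{two} disks and thereby produce a solid torus rather than a ball. The one-sided collar, which meets the spine in the disk $F_i$ whose interior is then discarded, is exactly what suppresses this extra handle.
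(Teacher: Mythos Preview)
Your argument is correct, and in fact produces the same $2$-stratifold spine as the paper does, but by a different route. The paper first forms the wedge $K_1 \vee K_2$ at nonsingular points and invokes an external result (Knutson's lemma) to know this wedge is a spine of $M_1 \# M_2$; since the wedge point is not a $2$-stratifold singularity, it then replaces the double cone in a small $3$-ball by an annulus $A$ with a transverse disk $D$ attached along its core, and checks that $B^3 - (A \cup D)$ is homeomorphic rel boundary to $B^3$ minus the double cone. You instead realize the connected sum directly by removing one-sided collar balls $B_i$ over disks $F_i$ in the white strata and gluing so that $F_1$ is identified with $F_2$; this gives the $2$-stratifold in one step (your $F$ is the paper's $D$, and your $W_i - \mathrm{Int}\,F_i$ are the paper's half-annuli plus the rest of $K_i$), and you verify the spine condition by decomposing $M - X$ as two half-balls glued along a single open disk. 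Your approach is self-contained and avoids the external citation; the paper's approach cleanly separates the ``spine'' step from the ``make it a $2$-stratifold'' step.

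One small wording issue: $V_i = U_i - \mathrm{Int}\,B_i$ is not literally ``still an open ball'' --- it is a half-space, i.e.\ a $3$-manifold with boundary the properly embedded open disk $\mathrm{Int}\,G_i \subset U_i$. Your next sentence (``glued \dots\ along one boundary disk each'') shows you are using it this way, and the conclusion that two half-spaces glued along their boundary planes give $\mathbb{R}^3$ is what you need; it would read more cleanly to say that directly.
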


\begin{proof} Let $K_i$ be a $2$-stratifold spine of $M_i$. Let $K_1\vee K_2$ be obtained by identifying, in the disjoint union of $K_1$ and $K_2$ a nonsingular point of $K_1$ with a nonsingular point of $K_2$. By Lemma 1 of \cite{K}, $K_1\vee K_2$  is a spine of $M$. Though $K_1\vee K_2$ is not a $2$-stratifold, by performing the operation explained below (replacing the wedge point by a disk) we will change $K_1\vee K_2$ to a $2$-stratifold spine $K_1\Delta K_2$. 

A $3$-ball neighborhood $B^3$ of the wedge point of $K_1\vee K_2$  intersects $K_1\vee K_2$ in the double cone shown in Fig.4. Replace, in $K_1\vee K_2$,   $K_1\vee K_2\cap B^3$ by $A\cup D$, as shown in Fig. 2, where $A=S^1{\times}[0,1]$  is an horizontal cylinder, $\partial A= (K_1\vee K_2)\cap \partial B^3$, $D$ is a vertical $2$-disk with $A\cap D=\partial D=S^1{\times}(1/2)$. The result is a $2$-stratifold $K_1\Delta K_2$. There is  a homeomorphism from $B^3 -(A\cup D)$ onto $B^3 - K_1\vee K_2$ which is the identity on the boundary (roughly collapse $D$ to a point) and so 
$M - K_1\Delta K_2$  is homeomorphic to $M - K_1\vee K_2$ which is homeomorphic to $R^3$.

Therefore $K_1\vee K_2$ is a $2$-stratifold spine of $M$.
\end{proof}
 \begin{figure}[h]
\begin{center}
\includegraphics[width=3in]{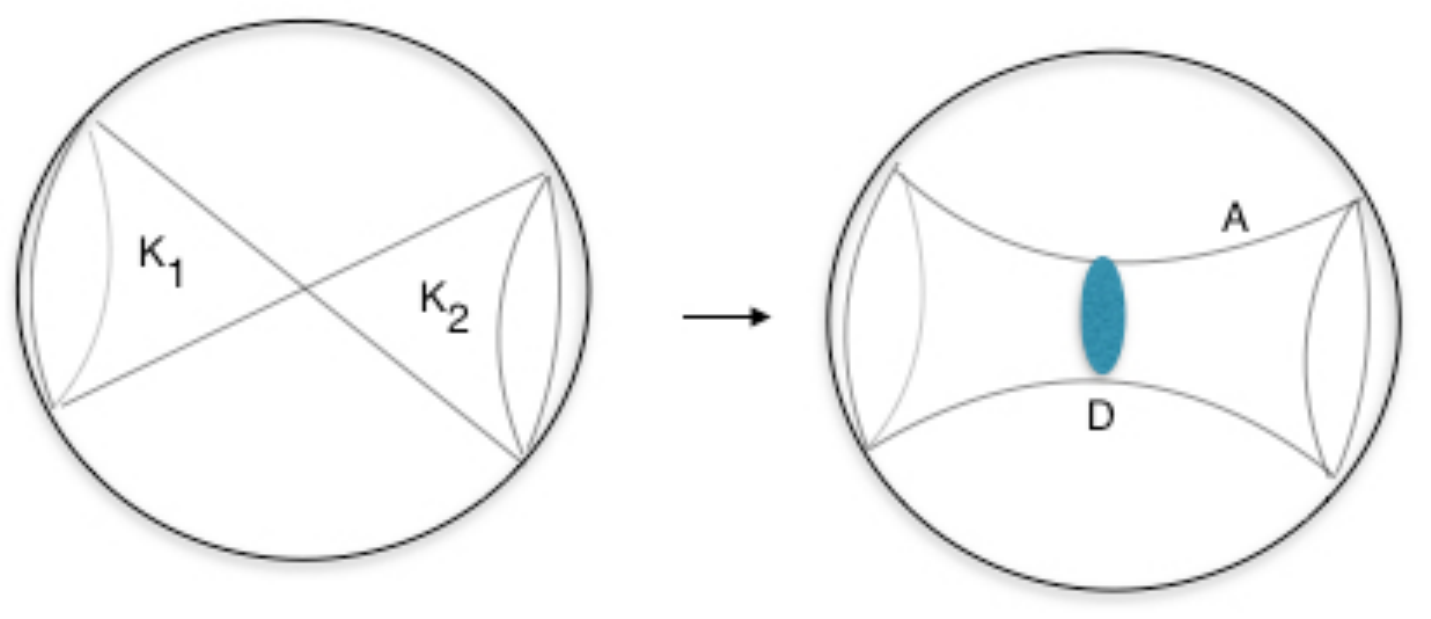} 
\end{center}
\caption{$K_1\Delta K_2$}
\end{figure}
Now Theorem \ref{piM} together with the examples and Proposition \ref{disksum} yields our main Theorem. Here we do not consider $S^3$ to be a lens space.

\begin{thm} \label{mainthm} A closed $3$-manifold $M$ has a $2$-stratifold as a spine if and only if $M$ is a connected sum of lens spaces, $S^2$-bundles over $S^1$, and $P^2{\times}S^1$'s. 
\end{thm}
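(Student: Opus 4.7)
The plan is to prove the two implications separately. Both rest on the machinery already built: for ``$\Rightarrow$'' I will appeal to Theorem \ref{piM} to constrain $\pi_1(M)$ and then invoke standard $3$-manifold classification results to identify the prime summands; for ``$\Leftarrow$'' I will use the four Examples of this section together with Proposition \ref{disksum}.

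For the ``only if'' direction, suppose $M$ admits a $2$-stratifold spine $X_G$. Then $\pi_1(M) \cong \pi_1(X_G)$, so by Theorem \ref{piM}, $\pi_1(M)$ is a free product of factors, each of which is cyclic or isomorphic to $\mathbb{Z}\times\mathbb{Z}_2$. Let $M = M_1 \# \cdots \# M_k$ be the Kneser--Milnor prime decomposition. Then $\pi_1(M) \cong \pi_1(M_1) * \cdots * \pi_1(M_k)$, and by the Grushko decomposition theorem (unique factorization of groups as free products of freely indecomposable pieces, up to order and conjugacy), each nontrivial $\pi_1(M_i)$ must itself be isomorphic to one of the freely indecomposable factors, i.e.\ cyclic or $\mathbb{Z}\times\mathbb{Z}_2$. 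I then classify each prime $M_i$ from its fundamental group: if $\pi_1(M_i)$ is finite cyclic and nontrivial, then $M_i$ is a lens space (since elliptic $3$-manifolds with cyclic fundamental group are lens spaces); if $\pi_1(M_i) \cong \mathbb{Z}$, then $M_i$ is one of the two $S^2$-bundles over $S^1$ (the only prime $3$-manifolds with infinite cyclic fundamental group, by the Sphere Theorem and the fact that an irreducible aspherical $3$-manifold cannot have $\pi_1 = \mathbb{Z}$); if $\pi_1(M_i) \cong \mathbb{Z}\times\mathbb{Z}_2$, then by the theorem of Epstein classifying $3$-manifolds with $\mathbb{Z}\times\mathbb{Z}_2$ fundamental group (together with the fact that $M_i$ is prime hence $P^2$-irreducible or equals $P^2\times S^1$), $M_i \cong P^2\times S^1$. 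Trivial factors contribute $S^3$ summands, which are absorbed.

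For the ``if'' direction, suppose $M = M_1 \# \cdots \# M_k$ where each $M_i$ is a lens space $L(p,q)$ with $q\neq 0,1$, or an $S^2$-bundle over $S^1$ (orientable or non-orientable), or $P^2\times S^1$. By Examples 2, 3, and 4, each such $M_i$ has a $2$-stratifold spine $K_i$. Applying Proposition \ref{disksum} inductively, $M_1 \# \cdots \# M_k$ has a $2$-stratifold spine, namely $K_1 \Delta K_2 \Delta \cdots \Delta K_k$.

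The main obstacle, and the most delicate step, is the classification input in the forward direction: knowing that the group-theoretic restriction from Theorem \ref{piM} really forces each prime summand into the three families listed. The passage from $\pi_1(M_i) \cong \mathbb{Z}$ to $M_i$ being an $S^2$-bundle (ruling out, say, a fake $S^2 \times S^1$) and from $\pi_1(M_i) \cong \mathbb{Z}\times\mathbb{Z}_2$ to $M_i \cong P^2\times S^1$ requires invoking nontrivial classical results (Sphere Theorem/Epstein), so the proof is essentially an assembly of Theorem \ref{piM}, prime decomposition, Grushko, and these classification theorems, plus the constructive examples and Proposition \ref{disksum} on the realization side.
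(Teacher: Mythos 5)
Your proposal is correct and follows essentially the same route as the paper: the paper's proof of Theorem \ref{piM} already carries out the prime decomposition and identifies the non-orientable summands (via Swarup) while leaving the orientable case to the standard classification of prime $3$-manifolds with cyclic fundamental group, and the converse is exactly the Examples combined with Proposition \ref{disksum}. Your version merely makes explicit (via Grushko/Kurosh and Epstein) some classification inputs that the paper leaves implicit in its one-line deduction.
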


 \
 
 {\it Acknowledgments:} J. C. G\'{o}mez-Larra\~{n}aga would like to thank LAISLA and the TDA project from CIMAT for  financial support and IST Austria for their hospitality.


\begin{thebibliography}{99}

\bibitem {AFW} M. Aschenbrenner, S. Friedel, H. Wilson, Decision Problems for $3$-manifolds and their fundamental groups, arXiv:1405.6274v2 [math.GT], (2015).

\bibitem {B} P. Bendich, E. Gasparovicy, C.J. Traliez, J. Harer, Scaffoldings and Spines: Organizing High-Dimensional Data Using Cover Trees, Local Principal Component Analysis, and Persistent Homology, arXiv:1602.06245v2 [cs.CG] 27 Feb 2016.

\bibitem {SC} J.S. Carter, Reidemeister/Roseman-type moves to embedded foams in 4-dimensional space. arXiv:1210.3608v1 [math.GT] 

\bibitem {GGH} J.C. G\'{o}mez-Larra\~{n}aga, F. Gonz\'alez-Acu\~na, Wolfgang Heil,  Categorical group invariants of 3-manifolds,  manuscripta math. 145 (2014), 433-448.

\bibitem {GGH1} J.C. G\'{o}mez-Larra\~{n}aga, F. Gonz\'alez-Acu\~na, Wolfgang Heil, $2$-stratifolds, in ``A Mathematical Tribute to Jos\'e Mar\'ia Montesinos Amilibia", Universidad Complutense de Madrid, 395-405 (2016).

\bibitem {GGH2} J.C. G\'{o}mez-Larra\~{n}aga, F. Gonz\'alez-Acu\~na, Wolfgang Heil, $2$-stratifold groups have solvable Word Problem, arXiv:1704.00686 [math.GT] (1917).

\bibitem{K} G.W. Knutson, A chacterization of closed 3-manifolds with spines containing no wild arcs,  Proc.Amer.Math.Soc. 21, 310-114 (1969).

\bibitem {Ko} M. Khovanov,  sl(3) link homology. Algebr. and Geom. Topol. 4, 1045-1081 (2004).

\bibitem {CL} P. Lum, G. Singh, J. Carlsson, A. Lehman, T. Ishkhanov, M. Vejdemo-Johansson, M. Alagappan, G. Carlsson, Extracting insights from the shape of complex data using topology. Nature Scientific Reports 3, 12-36 (2013).

\bibitem {LS} R. C. Lyndon and P. E. Schupp, Combinatorial Group Theory, Modern Surveys in Math., no. 89, Springer Verlag, Berlin, 1977.

\bibitem{MO} S. Matsuzaki and M. Ozawa, Genera and minors of multibranched surfaces, arXiv:1603.09041v1 [math.GT] 30 Mar 2016.

\bibitem{Ma} Algorithmic Topology and Classification of 3-Manifolds, S. Matveev, in Algorithms and Computation in Mathematics Vol 9, Springer (2007).

\bibitem {M} J. Milnor Groups which act without fixed points on $S^n$, Amer. J. Math. 79  623-630 (1957).

\bibitem {N}  W. Neumann, A calculus for plumbing applied to the topology of complex surface singularities and degenerating complex curves, Trans. Amer. Math. Soc. 268, 299-344 (1981).

\bibitem {SW} P. Scott and C.T.C.Wall, Topological Methods in Group Theory, In Homological Group Theory, London Math. Soc. Lecture Notes Ser. 36, Cambridge Univ. Press (1979).

\bibitem{Serre} J.P. Serre, Trees, Springer-Verlag, 1980. 

\bibitem{S} G.A.Swarup, Projective planes in irreducible $3$-manifolds, Math.Z. 132, 305-317 (1973).

\end{thebibliography}
\end{document}